\newtheorem{thm}{Theorem}[section]
\newtheorem{prop}[thm]{Proposition}
\newtheorem{cor}[thm]{Corollary}
\theoremstyle{definition}
\newtheorem{dfn}[thm]{Definition}
\title{Envy-free division in the presence of a dragon}
\author[G. Panina]{Gaiane Panina}
\author[R. \v{Z}ivaljevi\'{c}]{Rade \v{Z}ivaljevi\'{c}}
\address[G. Panina]
{ St.\ Petersburg Department of Steklov Mathematical Institute}
\address[R. \v{Z}ivaljevi\'{c}]{Mathematical Institute of the Serbian Academy of Sciences and Arts (SASA), Belgrade}
\address{
  }
 \keywords{Envy-free division, Dragon marriage theorem, KKM theorem, configuration space/test map scheme}
\begin{document}

\date{December 24, 2021}

\begin{abstract}
Following a novel approach, where the emphasis is on configuration spaces and equivariant topology, we prove several results addressing the \emph{envy-free division problem} in the presence of an unpredictable (secretive, non-cooperative) player, called \emph{the dragon}. There are two basic scenarios.

 1. There are $r-1$ players and a dragon. Once the ``cake'' is divided into $r$ parts, the dragon makes his choice  and grabs one of the pieces. After that the players should be able to share the remaining pieces in an envy-free fashion.

  2. There are $r+1$ players who divide the cake into $r$ pieces. A ferocious dragon comes and swallows one of the players. The players need to cut the cake in advance in such a way that no matter who is the unlucky player swallowed by the dragon, the remaining players can share the tiles in an envy-free manner.

  We emphasize that  in both settings the players are allowed to choose degenerate pieces of the cake. Moreover, the players construct in advance both a cut of the cake and a \emph{decision tree}, allowing them to minimize the uncertainty of what pieces can be  given to each of the players.
\end{abstract}

 \maketitle \setcounter{section}{0}

\section{Introduction}\label{sec:Overview}

 Given some resource identified with the unit segment $I=[0,1]$ and a set of agents (players), one of the goals of
\emph{welfare economics} is to divide the resource among the agents in an \emph{envy-free} manner.
Envy-freeness is the principle where every player feels that their share is at least as good as the share of any other agent, and thus no player feels envy.

In the literature $I$ is  commonly referred to as the \textit{cake}.
A \textit{cut} of the cake  is a sequence of numbers $x = (x_1,\dots, x_{r-1})$ where
\begin{equation}\label{eqn:cut}
    0 \leqslant x_1 \leqslant x_2 \leqslant \dots \leqslant x_{r-1} \leqslant 1
\end{equation}
so the set of all cuts is naturally identified as the standard $(r-1)$-dimensional simplex $\Delta^{r-1}$.

The pieces of the cake arising from the \textcolor{red}{cut} (\ref{eqn:cut}) are the closed intervals (tiles)  $I_i = I_i(x) := [x_{i-1}, x_i]\,
(i=1,\dots, r)$, where $x_0 = 0$ and $x_r=1$.

\medskip
The non-degeneracy of tiles is a salient feature of some classical results such as Gale's \emph{equilibrium theorem} \cite{G}. However, as noticed by several authors in more recent publications \cite{AK, AK1, MeZe, S-H, PaZi},  this condition may be too restrictive in some situations.

 After a cut $x$ is chosen, each of the players expresses her individual \emph{preferences}, over the tiles arising from this particular cut, by pointing to one (or more) intervals which they like more than the rest.

  This idea is formalized in the following definition, where the individual preferences are interpreted as subsets of the simplex $\Delta^{r-1}$.

\begin{dfn}{\rm (\cite[Definition 2.2]{PaZi})}
The preferences of $r$ players is a matrix of subsets $(A^j_i)_{i,j=1}^r$ of the standard simplex $\Delta^{r-1}$. The subsets are interpreted as preferences in the usual sense as follows:
\begin{equation}\label{eqn:prefs-1}
x\in A^j_i \ \ \Leftrightarrow  \, \mbox{ {\rm in the cut} } x \mbox{ {\rm the player} } j \mbox{ {\rm prefers the tile} } i \, .
\end{equation}
\end{dfn}

The nature of the preferences may be unknown or hidden (black box preferences). There are however some conditions which are natural (intrinsic) or even unavoidable in the sense that they must be satisfied by all preferences.

\medskip
The necessary assumption are the following:

\begin{itemize}
\item[$\mathbf{(P_{cl})}$] The preferences (the sets $A_i^j$) are \emph{closed}.  That is, if a sequence of cuts $(x^{(n)})_{n\in \mathbb{N}}$ converges $x^{(n)}\longrightarrow x$ and if in this sequence a player always prefers the tile $I_i(x^{(n)})$, then the player also prefers the limit tile  $I_i(x)$.
\item[$\mathbf{(P_{cov})}$] The preferences form a covering: $\bigcup_{i=1}^r A_i^j=\Delta^{r-1} \ \ \forall j=1,...,r$. This means that whatever a cut is, each player  is expected to prefer at least one of the offered pieces.
\end{itemize}

If we allow players to choose degenerate tiles, the following condition is also quite natural.

\begin{itemize}
\item[$(\mathbf{P_{pe}})$] (Partition equivalence)  If two cuts produce one
and the same collection of non-degenerate tiles, the preferences in these
two cuts should be essentially the same (one can recovered from the other and vice versa). 
\end{itemize}

Expressed more explicitly this condition says the following.

\medskip
(1) A player prefers a non-degenerate tile for the first cut iff she prefers
the same tile for the second cut.

(2) A player prefers a degenerate tile for the first cut iff she prefers any
of the degenerate tiles for the second cut, and vice versa.

This condition implies, in particular, that the players do not distinguish the degenerate tiles.

\bigskip

Here is the summary of central results of the paper, presented in an informal and non-technical manner.

\medskip

\textbf{1.} (Theorem \ref{thm:prva}) Assume that $r$ is a prime power and that the preferences of $r$ players are closed $\mathbf{(P_{cl})}$ and covering $\mathbf{(P_{cov})}$. Degenerate tiles may be preferred  and the   partition equivalence condition $(\mathbf{P_{pe}})$ holds.
 One of the $r$ players is a dragon whose preferences are secret.  It is known that, once the cake is cut into $r$ tiles, the dragon will be the one who chooses the first. Then it is always possible to cut the cake in advance into $r$ tiles such that, whatever tile is taken by the dragon, the rest of the tiles may be allocated to the players so that they don't feel envy, neither to the dragon, nor to each other.  (It is self understood that the dragon, being the first to choose, feels no envy as well.)

 \medskip
 Moreover, each of the $r$ pieces can be assigned in advance at most two names of the players so that, whatever is the piece taken by the dragon, each of the players will be given a piece with her name on it. Equivalently, each human player is given in advance information about two tiles, one of which would be given to them under any circumstances.

\bigskip

\textbf{ 2. }(Theorem \ref{thm:druga})
  Let $r$ be a power of a prime. Assume that $r+1$ players have  closed $\mathbf{(P_{cl})}$, covering $\mathbf{(P_{cov})}$  preferences, which satisfy the   partition equivalence condition $(\mathbf{P_{pe}})$, and the players are allowed to choose degenerate tiles. It is known that once the cake is cut into $r$ tiles, the dragon will appear and swallow one of the players.
Then it is always possible to cut the cake in advance into $r$ tiles such that regardless of which player is taken by the dragon, the rest of the tiles may be allocated to the players in an envy-free way.

\medskip
 As in the previous theorem, each of the $r$ pieces can be assigned in advance two names of the $r+1$ players so that, whoever is the player swallowed  by the dragon, each of the surviving players will be given a piece with her name on it. Alternatively, each human player is informed in advance about at most two tiles, which include the tile that will be given to them if they are spared by the dragon.

\medskip

If degenerate tiles are forbidden,  the condition $(\mathbf{P_{pe}})$ is not needed and both Theorems 5.2 and 6.2 hold true with no assumption on $r$ whatsoever (see Theorem \ref{thm:dragon-tree} for illustration).

\subsection{Brief historical overview and predecessors of our results }

All our theorems derive inspiration from the results, both new and classical, about the \emph{envy-free-cake-cutting}.

\begin{enumerate}
\item[(1)] The original envy-free cake-cutting theorem was independently discovered by Stromquist \cite{Strom} and Woodall \cite{Wood} in 1980. It addresses the case of \emph{hungry players}, where the degenerate tiles  are never chosen by the players, and claims that (under the conditions $\mathbf{(P_{cl})}$ and $\mathbf{(P_{cov})}$) an envy-free division is always possible for any number of players $r$.
 \item[(2)] D. Gale \cite{G} is the author of the proof scheme where the result of Stromquist and Woodall is derived with the aid of the Birkhoff - von Neumann theorem about the polytope of bistochastic matrices.
 \end{enumerate}

  Much more recent are the results where the existence of envy-free divisions is guaranteed under much less restrictive conditions that the players are not necessarily hungry (which allow the players to choose degenerate tiles). However, in these results the number of players is necessarily a prime power, $r = p^k$.

  \begin{enumerate}
  \item[(3)] As shown by Avvakumov and Karasev \cite[Theorem 3.3]{AK}, see also \cite{AK1} and \cite{PaZi}, if $r$ is a prime power then an envy-free division (where all non-degenerate tiles are allocated) always exists if the preferences satisfy the conditions $\mathbf{(P_{cl})}, \mathbf{(P_{cov})}$ and $(\mathbf{P_{pe}})$.
 \item[(4)]
This result  was established earlier by Meunier and Zerbib \cite[Theorem 1]{MeZe} in the cases when $r$ is a prime or $r=4$, see also Segal-Halevi \cite{S-H} where the result was conjectured and proved in the case $r=3$.
  \item[(5)] As shown in \cite{AK} if $r$ is not a prime power the theorem of Avvakumov and Karasev is no longer true.
 \end{enumerate}

D.R. Woodall proved (already in \cite{Wood}) a refined version of the envy-free-division-theorem (for hungry players) where a \emph{secretive player} hides her preferences.
 \begin{enumerate}
 \item[(6)] Woodall's result was rediscovered by Asada et al.\ \cite{AsadaFrick17}, with a much simpler proof, under the name \emph{Strong colorful KKM theorem}.
 \item[(7)] Meunier and Su, applying the method of multilabeled Sperner lemmas, proved \cite[Corollary 1.1]{MeSu} a result where one of the players (unknown in advance) is ``voted off the group''.
 \end{enumerate}

Both (6) and (7) are proven for arbitrary $r$ with the assumption that the players are hungry (so they never choose  degenerate tiles).

\subsection{Dragon marriage lemma}

The following proposition \cite[Proposition 5.4]{postnikov_permutohedra_2009} is repeatedly used throughout the paper. It is a version of \textit{Hall's marriage theorem}  in the presence of a dragon: there are $n$ grooms and $n+1$ brides, and one of the brides is taken by a dragon. For the reader's convenience we supply a short proof.
\begin{prop}{\rm (Dragon marriage lemma)}\label{prop:Dragon-lemma}
 Let $J_1, \dots , J_{n-1} \subseteq [n]$ be not necessarily distinct sets. Then the following three conditions are equivalent:

\begin{enumerate}
\item[{\rm (1)}] For  each $k$-element subset $\{i_1,\dots, i_k\}\subseteq [n],\,  \vert J_{i_1} \cup\dots\cup J_{i_k}\vert \geq k + 1$.

\item[{\rm (2)}] For each $j\in [n]$, there is a system of distinct representatives in the family $J_1,\dots, J_{n-1}$
that avoids $j$.

\item[{\rm (3)}] There is a system of 2-element representatives $\{a_i, b_i\} \subseteq  J_i$
such that $(a_1, b_1),\dots, (a_{n-1}, b_{n-1})$ are edges of a spanning tree in $K_n$.
\end{enumerate}
\end{prop}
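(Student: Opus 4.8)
The plan is to take condition (1) as the hub and prove $(1)\Leftrightarrow(2)$ and $(1)\Leftrightarrow(3)$ separately; since (2) and (3) both turn out to be equivalent to the union bound in (1), this yields the full three-way equivalence. Throughout I read the index set in (1) as $\{i_1,\dots,i_k\}\subseteq[n-1]$ (there are only $n-1$ sets), and I write $U_K:=\bigcup_{i\in K}J_i$ for $K\subseteq[n-1]$.

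For $(1)\Leftrightarrow(2)$ I would invoke Hall's marriage theorem. Fixing $j\in[n]$, apply Hall to the family $J_1\setminus\{j\},\dots,J_{n-1}\setminus\{j\}$ inside the $(n-1)$-element ground set $[n]\setminus\{j\}$: an SDR avoiding $j$ is exactly an SDR of this modified family. Its Hall condition reads $|U_K\setminus\{j\}|\geq|K|$ for every $K$, and this is immediate from (1), since deleting a single element shrinks a union by at most one, so $|U_K\setminus\{j\}|\geq|U_K|-1\geq|K|$. Conversely, for $(2)\Rightarrow(1)$, suppose some $K$ with $|K|=k$ had $|U_K|=k$. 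Then in any SDR of the whole family the $k$ sets indexed by $K$ must use up all $k$ elements of $U_K$, so choosing any $j\in U_K$ shows that no SDR can avoid $j$, contradicting (2). Hence $|U_K|\geq k+1$.

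The implication $(3)\Rightarrow(1)$ is a one-line forest count: given a spanning tree whose edge $\{a_i,b_i\}$ lies in $J_i$, the edges indexed by a $k$-set $K$ form a subforest with $k$ edges, hence span at least $k+1$ vertices, all lying in $U_K$; so $|U_K|\geq k+1$.

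The substantive step is $(1)\Rightarrow(3)$, and here the plan is to use Rado's independent-transversal theorem for the graphic (cycle) matroid $M$ of $K_n$. For each $i$ let the candidate set be $E_i:=\binom{J_i}{2}$, the edges of $K_n$ supported on $J_i$ (nonempty, since (1) with $k=1$ forces $|J_i|\geq2$). A system of distinct representatives $e_i\in E_i$ that is independent in $M$ is precisely a forest of $n-1$ edges with $e_i\subseteq J_i$; as $M$ has rank $n-1$, such a forest is automatically a spanning tree, which is (3). By Rado's theorem such an independent transversal exists once $r_M\!\left(\bigcup_{i\in K}E_i\right)\geq|K|$ for every $K\subseteq[n-1]$. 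Now $\bigcup_{i\in K}E_i$ is the union of the cliques on the sets $J_i$, $i\in K$, and its rank in the graphic matroid equals $|U_K|-c(K)$, where $c(K)$ is the number of connected components of this union of cliques. The hard part will be upgrading the bare inequality $|U_K|\geq|K|+1$ of (1) to the stronger $|U_K|\geq|K|+c(K)$. I would do this by decomposing along components: each clique $J_i$ lies entirely in one component, so the components induce a partition $K=K_1\sqcup\dots\sqcup K_{c(K)}$ with pairwise disjoint vertex sets $U_{K_1},\dots,U_{K_{c(K)}}$. Applying (1) to each block gives $|U_{K_\ell}|\geq|K_\ell|+1$, and summing over $\ell$ yields $|U_K|=\sum_\ell|U_{K_\ell}|\geq|K|+c(K)$, i.e. $r_M\!\left(\bigcup_{i\in K}E_i\right)\geq|K|$. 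This verifies Rado's hypothesis and completes $(1)\Rightarrow(3)$.
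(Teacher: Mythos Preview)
Your proof is correct. The equivalence $(1)\Leftrightarrow(2)$ and the implication $(3)\Rightarrow(1)$ match the paper's treatment essentially verbatim. The interesting divergence is in $(1)\Rightarrow(3)$: the paper proceeds by induction on the number of edges of the bipartite incidence graph $G\subseteq[n-1]\times[n]$, passing to an edge-minimal $G$ still satisfying (1), isolating a maximal \emph{tight} subset $I\subset[n-1]$ (one with $|G[I]|=|I|+1$), building a spanning tree on $G[I]$ by induction, and then gluing it at a single shared vertex to a second inductively obtained tree on the complement. Your route via Rado's independent-transversal theorem for the cycle matroid of $K_n$ is a genuinely different and cleaner argument: the component decomposition that upgrades $|U_K|\geq|K|+1$ to $|U_K|\geq|K|+c(K)$ is exactly what Rado's rank condition demands, and once that is checked the spanning tree drops out immediately. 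The trade-off is that your proof imports a stronger black box (Rado rather than Hall), whereas the paper's argument is more self-contained but requires the somewhat delicate bookkeeping with tight sets. One cosmetic point: in $(2)\Rightarrow(1)$ you assume the failing $K$ has $|U_K|=k$; the case $|U_K|<k$ is of course even easier (Hall already fails for the unrestricted family), but it would be tidier to say $|U_K|\leq k$ and note that deleting any $j\in U_K$ then violates Hall for the family $\{J_i\setminus\{j\}\}$.
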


\begin{proof}
The statements (1) and (2) are clearly equivalent in light of  \emph{Hall's marriage theorem}. Moreover $(3) \Rightarrow (1)$ is obvious, so the interesting part of proposition is the implication $(1) \Rightarrow (3)$.

\medskip
Let $G\subseteq [n-1]\times [n]$ be a bipartite graph where
\[
     (i,j)\in G \quad \Leftrightarrow \quad j\in J_i \, .
\]
 We prove the implication $(1) \Rightarrow (3)$ by induction on the size (number of edges) of the graph $G$.

 \medskip
Let us suppose that $G$ is an inclusion minimal graph which still possesses the property (1), meaning that the \emph{Dragon marriage condition} (1) is no longer satisfied if any of the edges is removed from $G$.

\medskip
Let us call a \emph{proper subset} $I \subset [n-1]$ $(\emptyset \neq I \neq [n-1])$ \emph{tight} if $\vert G[I]\vert = \vert G\vert +1$, where $G[I]:= \{j\in [n] \mid \exists i \in I\, (i,j)\in G\}$.

\medskip
Note that the existence of tight subsets $I \subset [n-1]$ is a consequence of minimality of $G$. Indeed, for any edge $(i,j)\in E(G)$ the graph $G_1 := G\setminus \{(i,j)\}$ does not satisfy (1), therefore $\vert G_1[I]\vert \leq \vert I \vert$ and $\vert G[I]\vert = \vert I\vert +1$ for a subset $I\subset [n-1]$ which contains $i$. (The possibility that $I = [n-1]$ can be easily ruled out by choosing an edge $(i,j)$ such that $(i', j)\in G$ for some $i'\neq i$. )

\medskip
Let $I$ be a tight subset of maximal cardinality. By the inductive hypothesis there is a bipartite graph $T_1\subseteq I\times G[I]$
which  defines (in the sense of (3)) a spanning tree on $G[I]$ with edges labelled by $I$.

\medskip
Let $J := [n-1]\setminus I$ and $V:= [n]\setminus G[I]$. Note that $G[I]\cap G[J]\neq \emptyset$, as a consequence of the fact that $\vert G[I]\vert + \vert G[J]\vert \geq \vert I\vert +\vert J\vert +2 = n+1$. Choose $y\in [I]\cap G[J]$ and define $G'$ as the intersection $G' := G\cap (J\times (V\cup\{y\}))$.

\medskip
Let us show that the bipartite graph $G'$ also satisfies the \emph{Dragon marriage condition} (1). Suppose that $J_1\subset J$ is a proper subset of $J$. Then $J_1\cup I$ is a proper subset of $[n-1]$ and (since $I$ is a maximal tight subset) $\vert G[J_1\cup I]\vert \geq \vert J_1 \vert + \vert I \vert +2$. It immediately follows that $\vert G'[J_1]\vert \geq \vert G[J_1]\setminus G[I]\vert \geq \vert J_1\vert +1$.

It remains to show that $G'[J] = V\cup\{y\}$, however this is obvious since, by the previous argument, $V\subseteq G'[J_1]$ for each subset $J_1\subset J$ of cardinality $\vert J\vert -1 $.

\medskip
By the inductive hypothesis there is a spanning tree $T_2 \subset G'$ with $V\cup\{y\}$ as the set of vertices and edges labelled by $J$. The trees $T_1$ and $T_2$ have only the vertex $y$ in common, so their union $T = T_1\cup T_2$ is also a tree, with the vertex set $[n]$ and edges labelled by $[n-1]$. This completes the proof of the proposition.
\end{proof}

\section{Strong colorful KKM theorem revisited}
\label{sec:strong}

The \emph{Strong colorful KKM theorem} \cite[Theorem 3.6]{AsadaFrick17} (see also \cite[Theorem 4]{Wood} and our Section \ref{sec:strong-KKM}) offers a solution to a problem of envy-free division in the presence of a dragon (the first scenario) in the classical case of  KKM (Knaster-Kuratowski-Mazurkiewicz) preferences.

The following theorem refines that result by putting emphasis on  a \emph{decision tree}, which provides much more precise information how the pieces of the cake are distributed.

\begin{thm}\label{thm:dragon-tree}
  Let $\Delta^n = \{t\in \mathbb{R}_+^{n+1} \mid \sum_{i=1}^{n+1} = 1\}$ be an $n$-dimensional simplex with facets $\Delta^n_i = \{t\in \Delta^n \mid t_i = 0\}$.  Let $(A_i^j)$ be a $((n+1)\times n)$-matrix of closed subsets of $\Delta^n$, called the matrix of preferences. Assume that for each $j\in [n]$ the family $\{A_i^j\}_{i\in [n+1]}$ satisfies the condition of the KKM-theorem. More explicitly,
  \begin{enumerate}
    \item The family $\{A_i^j\}_{i\in [n+1]}$ is a covering of the simplex $\Delta^n$  for each $j\in [n]$.
    \item The intersection $A_i^j\cap \Delta^n_i$ is empty for all $i\in [n+1]$ and $j\in [n]$.
      \end{enumerate}
 Then one can choose for each $j\in [n]$ two distinct elements $u_j$ and $v_j$ in $[n+1]$ such that
 \begin{enumerate}
   \item[(a)] The collection $E =\{e_j\}_{j\in [n]}$ of two element sets $e_j=\{u_j, v_j\}$ is the edge-set of a tree $T=(V,E)$ (connected graph without cycles) on $V = [n+1]$ as the set of vertices.
   \item[(b)]    \begin{equation}\label{eqn:tree-intersection}
                 \bigcap_{v \mbox{ {\rm\tiny incident to} }e } A_v^e  =  \bigcap_{j\in [n]}   (A_{u_j}^j\cap A_{v_j}^j) \neq \emptyset \, .
   \end{equation}
 \end{enumerate}
\end{thm}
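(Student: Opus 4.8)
The statement asks to produce, from a matrix of KKM preferences, a spanning tree on $[n+1]$ together with a nonempty intersection governed by the tree. I would approach this by combining the Dragon marriage lemma (Proposition~\ref{prop:Dragon-lemma}) with a suitable application of a KKM-type selection argument. The natural strategy is a \emph{configuration space / test map} scheme: for each player $j$ the KKM hypothesis produces, via the KKM theorem, a point where several preferred tiles overlap; the goal is to organize these overlaps across all $n$ players so that the chosen indices assemble into a spanning tree.

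**First steps.** First I would recall the \emph{Strong colorful KKM theorem} cited in the section heading, which in the single-player KKM setting guarantees a point lying in prescribed intersections even when one color is withheld. For each player $j\in[n]$ and each index $i\in[n+1]$, define the preference set $A_i^j$; the two KKM hypotheses (covering and the boundary condition $A_i^j\cap\Delta^n_i=\emptyset$) are exactly what is needed for the classical KKM conclusion, namely $\bigcap_{i\in[n+1]}A_i^j\ne\emptyset$ for each fixed $j$, and more refined ``secretive/colorful'' versions yielding large guaranteed overlaps. The plan is to extract, for each player, a family of admissible index-pairs, i.e.\ the set $J_j\subseteq[n+1]$ of tiles that player $j$ could be made to prefer at a common point, and then verify that these families $J_1,\dots,J_n$ satisfy the Dragon marriage condition~(1): every $k$-element subfamily has union of size $\ge k+1$.

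**Invoking the Dragon lemma.** Once the Dragon marriage condition~(1) is verified for the families $(J_j)_{j\in[n]}$, Proposition~\ref{prop:Dragon-lemma} immediately yields a system of two-element representatives $\{u_j,v_j\}\subseteq J_j$ forming the edge set of a spanning tree $T$ on $[n+1]$ — this gives conclusion~(a) directly. The remaining task is conclusion~(b): that the specific intersection $\bigcap_{j\in[n]}(A_{u_j}^j\cap A_{v_j}^j)$ is nonempty. Here I expect the argument must feed the \emph{tree structure} back into a topological existence statement: the spanning tree, being connected on all $n+1$ vertices and having exactly $n$ edges (one per player), should correspond to a configuration in which the combined test map for all players hits a common point. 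I would set up a test map on $\Delta^n$ (or on an auxiliary configuration space keyed to the tree) whose zero set witnesses the nonempty intersection, and derive its existence from a Borsuk--Ulam / equivariant argument, which is presumably where the prime-power hypothesis enters in the later theorems but is unnecessary here because the hungry (boundary) condition makes the KKM theorem alone suffice.

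**The main obstacle.** The hard part will be establishing conclusion~(b) with the \emph{same} pairs $\{u_j,v_j\}$ that the Dragon lemma selects for~(a), rather than merely showing that \emph{some} tree-indexed intersection is nonempty. The Dragon lemma is purely combinatorial and has no knowledge of which overlaps are topologically realized at a common point, so one cannot naively apply it to abstract families $J_j$ and hope the resulting tree also satisfies the intersection condition. The resolution I anticipate is to run the two arguments simultaneously: prove a strengthened KKM/colorful statement asserting that for \emph{every} choice of spanning tree arising from the Dragon lemma applied to the realizable families, the associated intersection is automatically nonempty — or, more likely, to define $J_j$ as precisely the set of indices occurring with nonempty overlap in a single fixed witnessing point or region, so that any tree built from these $J_j$ inherits nonemptiness by construction. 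Verifying the Dragon marriage condition~(1) for these topologically-defined families is then the crux, and I would expect it to follow from the covering hypothesis~(1) together with the boundary condition~(2), which together prevent any small subfamily from being confined to too few tiles.
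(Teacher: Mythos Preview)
Your overall architecture is right and matches the paper's: locate a single witness point $x\in\Delta^n$, define $J_j$ as the set of indices $i$ that player $j$ ``prefers'' at $x$, verify the Dragon marriage condition for $(J_j)_{j\in[n]}$, apply Proposition~\ref{prop:Dragon-lemma} to extract the spanning tree, and then close up with a limit. Where your proposal has a genuine gap is the mechanism that produces $x$ and, more importantly, the reason the Dragon marriage condition holds at $x$.

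The paper does not invoke the KKM theorem, a colorful KKM statement, or any equivariant/Borsuk--Ulam argument here. Instead it replaces each closed cover $\{A_i^j\}_i$ by a partition of unity $\{f_i^j\}_i$ subordinate to open neighborhoods $O_i^j\supseteq A_i^j$ with $O_i^j\subseteq\Delta^n\setminus\Delta^n_i$, and then \emph{averages over the players}: set $h_i=\tfrac{1}{n}\sum_{j} f_i^j$, obtaining a map $h:\Delta^n\to\Delta^n$. The boundary hypothesis~(2) makes $h|_{\partial\Delta^n}$ linearly homotopic to the identity, so $\deg h=1$ and $h$ hits the barycenter; this yields $x$ with $h_i(x)=\tfrac{1}{n+1}$ for all $i$. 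Your proposal never isolates this averaging/degree step, and the phrase ``a test map \dots keyed to the tree'' has the order reversed: the tree is produced \emph{after} $x$ is found, not fed into the test map.

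The second gap is the verification of the Dragon marriage condition, which you flag as ``the crux'' but do not supply. It is a one-line count that depends precisely on the barycenter condition, not directly on the covering or boundary hypotheses: for $S\subseteq[n]$ and $\Omega[S]=\{i:\exists j\in S,\ f_i^j(x)>0\}$,
\[
|S|=\sum_{j\in S}\sum_{i} f_i^j(x)\le \sum_{i\in\Omega[S]}\sum_{j\in[n]} f_i^j(x)=\sum_{i\in\Omega[S]} n\,h_i(x)=\frac{n}{\,n+1\,}\,|\Omega[S]|,
\]
whence $|\Omega[S]|\ge|S|+1$. Finally, since the $J_j$ are read off from the supports of the $f_i^j$ (hence from the $O_i^j$, not the $A_i^j$), a compactness/limit argument shrinking $O_i^j\searrow A_i^j$ is required to land the intersection in the closed sets; this step is absent from your outline and should be stated explicitly, as only finitely many trees occur so one may pass to a subsequence.
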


\proof
 Let $(O_i^j)$ be a $((n+1)\times n)$-matrix of open sets in $\Delta^n$ such that $A_i^j \subseteq O_i^j
\subseteq \Delta^n \setminus \Delta^n_i $ for each $i$ and each $j$. Let  $(f_i^j)$ be a matrix of functional preferences (fuzzy set preferences) associated to the matrix of preferences $(A_i^j)$, subordinated to $(O_i^j)$. More explicitly $(f_i^j)$ is a collection of functions $f_i^j : \Delta^n \rightarrow [0,1]$ satisfying the following conditions.
\begin{enumerate}
  \item For each $j\in [n]$ the collection $\{f_i^j\}_{i=1}^{n+1}$ is a partition of unity
  \[
          f_1^j + f_2^j + \dots + f_{n+1}^j   =1 \, .
  \]
  \item For each $i$ and $j$
  \[
           A_i^j \subseteq \{x \mid f_i^j(x) > 0\} \subseteq O_i^j \subseteq \Delta^n \setminus \Delta^n_i \, .
  \]
\end{enumerate}
Let $h_i = \frac{1}{n}(f_i^1 + f_i^2 + \dots + f_i^{n})$. The restriction of the map
\[
         h = (h_1,h_2,\dots, h_{n+1}) : \Delta^n \longrightarrow \Delta^n
\]
to the boundary $\partial \Delta^n$ is homotopic (by the linear homotopy) to the identity map. It follows that the degree of this map is equal to one and, as a consequence, $h(x) = (\frac{1}{n+1}, \dots, \frac{1}{n+1})$ for some $x\in \Delta^n$.

\medskip
Let $M$ be the matrix $M = (f_i^j(x))$ and $\Omega = (\omega_i^j) = ({\rm Sign}(f_i^j(x))$ the associated $0$-$1$ matrix of signs (${\rm Sign}(\cdot) \in \{-1, 0, +1\}$).

\medskip\noindent
{\bf Lemma.}
  The matrix $\Omega$ interpreted as a bipartite graph (with $(n+1)$ brides and $n$ grooms) satisfies the ``Dragon marriage condition'' of Postnikov \cite[Section 5]{postnikov_permutohedra_2009} (see Proposition \ref{prop:Dragon-lemma}) saying that for each subset $S\subseteq [n]$ and the corresponding set
  $\Omega[S] = \{i\in [n+1] \mid \exists j\in [n] \, f_i^j(x) > 0\}$,
  \[
                \vert \Omega[S]\vert \geq \vert S \vert + 1 \, .
  \]

Indeed,
\[
      \vert S \vert = \sum_{i\in [n+1]}^{j\in S} f_i^j  \leq \sum_{i\in \Omega[S]}^{j\in [n]} f_i^j = \frac{n}{n+1} \vert\Omega[S] \vert \quad   \Rightarrow \quad \vert\Omega[S]\vert \geq \vert S\vert +1 \, .
\]
By  Proposition \ref{prop:Dragon-lemma}   there exists a system of $2$-element representatives $e_j = \{u_j, v_j \} \subseteq \{i \mid f_i^j(x) > 0\}$, such that $E = \{e_j\}$ is the edge-set of a tree on $[n+1]$.

\medskip
The proof is completed by choosing smaller and smaller open neighborhoods $O_i^j$ of closed sets $A_i^j$ and by passing to the limit.  \qed

\subsection{Strong colorful KKM theorem}\label{sec:strong-KKM}
Theorem \ref{thm:dragon-tree} implies the standard form of the \emph{strong colorful KKM theorem} \cite[Theorem 3.6]{AsadaFrick17}  which claims that, given $n$ $KKM$-coverings $\{A_i^j\}_{j=1}^n$ of $\Delta^n$, there exists $x\in\Delta^n$ and $n+1$ bijections $\pi_i : [n] \rightarrow [n+1]\setminus \{i\}$ such that for all $i\in [n]$
\begin{equation}\label{eqn:perm-intersection}
x\in A^1_{\pi_i(1)}\cap\dots\cap A^n_{\pi_i(n)} \, .
\end{equation}

\medskip
Indeed, if a vertex $i\in [n+1]$ is chosen to be the root of the tree $T = (V,E) = ([n+1], [n])$, then there is a canonical bijection $\pi_i : V\setminus\{i\} \rightarrow E$ such that $k$ is incident to $e_k := \pi_i(k)$ for each $k\in [n+1]\setminus\{i\}$, and (\ref{eqn:perm-intersection}) is an immediate consequence of (\ref{eqn:tree-intersection}).

\bigskip
A salient feature of Theorem \ref{thm:dragon-tree} is that, given a matrix of preferences $(A_i^j)$, one should be able
to determine both $x$ and a tree $T$ (not just $x$),  which automatically leads to a very small collection of $n+1$ bijections $\pi_i$, arising from the tree $T$.

For example, just from the knowledge of $x$ and $T$, each player will know in advance which two tiles are the only candidates to be allocated to them. Moreover, from the shape of the tree they would be able to calculate the probability
of getting each of the two selected tiles, etc.

\section{Envy-free division via configuration spaces}\label{sec:prelims}

Following the general framework of \cite{PaZi},
in this section we outline a new approach to envy-free division based on equivariant topology and configuration spaces (chessboard complexes). 

\medskip
As before there are $r$ players, but now they are allowed to choose degenerate pieces.
Originally the cake $I = [0,1]$ was divided by $(r-1)$ cut-points into $r$ pieces. Now we consider cuts $x$ with a larger number of cut-points
\begin{equation}\label{eqn:cut-larger}
    0 \leqslant x_1 \leqslant x_2 \leqslant \dots \leqslant x_{2r-2} \leqslant 1
\end{equation}
together with associated allocation functions $\alpha : [2r-1] \rightarrow [r]$, which place the tiles $\{I_i\}_{i=1}^{2r-1}$
(created by the cut $x$) into $r$ ``boxes'' \cite[Section 2]{PaZi}. The pair $(x,\alpha)$ is referred to as a \emph{partition/allocation} of the cake $[0,1]$.

\medskip
We allow only \emph{``admissible''} allocation functions $\alpha : [2r-1]\rightarrow [r]$,  permitting at most one non-degenerate tile in each of the boxes. As a consequence at least $r-1$ tiles are degenerate
and many of the cut-points (\ref{eqn:cut-larger}) coincide.

\medskip

Two admissible allocation  functions $\alpha, \beta : [2r-1]\rightarrow [r]$ (and the corresponding partitions/allocations $(x,\alpha)$ and $(x,\beta)$)  are equivalent if they differ only by positions of the degenerate tiles. More explicitly
\[
    (x, \alpha) \thicksim (x,\beta)  \quad \Leftrightarrow \quad (\forall i\in [2r-1])( \, \alpha(i) \neq \beta(i) \,\Rightarrow  \, I_i \mbox{ {\rm is degenerate})}\, .
\]

\medskip
Following \cite[Section 5]{PaZi} we define $\mathcal{C}$   as the configuration space of all  equivalence classes  $[(x,\alpha)]$ of   pairs $(x,\alpha)$, where $x$ is a cut and $\alpha$ an associated admissible allocation function.

\medskip
Alternatively the configuration space  $\mathcal{C}$ can be described \cite[Proposition 5.1]{PaZi} as the chessboard complex $\Delta_{r,2r-1}$ of all non-attacking configurations of rooks in a $[r]\times [2r-1]$ chessboard. (Two rooks are in a non-attacking position if they are not allowed to be in the same row or in the same column of the chessboard.)

\medskip
In this interpretation the rows of the $[r]\times [2r-1]$-chessboard correspond to the boxes, columns correspond to the tiles, empty columns correspond to degenerate tiles. The condition ``a non-attacking collection of rooks'' is translated as ``at most one non-degenerate tile per box is allowed'', etc.

\medskip
More information about the history and applications of chessboard complexes in combinatorics and discrete geometry can be found in \cite{Mat, Z17, BLVZ, vz11, ZV92}.

\begin{dfn}{\rm (\cite[Definition 2.2]{PaZi})}
The $\mathcal{C}$-preferences of $r$ players is a matrix of subsets $(B^j_i)_{i,j=1}^r$ of the configuration space $\mathcal{C}\cong \Delta_{r,2r-1}$. More explicitly,
\begin{equation}\label{eqn:prefs-2}
\begin{split}
(x, \alpha) \in B^j_i \ \ \Leftrightarrow  \, &  \mbox{ {\rm in the cut} } x \mbox{ {\rm and the allocation} }
\alpha \mbox{ {\rm the player} } j\\
& \mbox{ {\rm prefers the content $\alpha^{-1}(i)$ of the box } } i \, .
\end{split}
\end{equation}
\end{dfn}

The symmetric group $S_r$  acts   on the configuration space $\mathcal{C}\cong \Delta_{r,2r-1}$ by renumbering the boxes
    $$\sigma(x,\alpha):= (x,\sigma \circ \alpha)$$
or equivalently, by permuting the rows of the chessboard $[r]\times [2r-1]$.

\medskip
 We say that $\mathcal{C}$-preferences are \emph{equivariant} if for each $\sigma\in S_r$,
\begin{equation}\label{eqn:equivariance-1}
  (x, \alpha ) \in B_{i}^j  \Leftrightarrow  \sigma(x, \alpha )\in B_{\sigma(i)}^j   \, .
\end{equation}
The condition (\ref{eqn:equivariance-1}) is quite natural. It follows from (\ref{eqn:prefs-2}) that $(x, \sigma\circ \alpha) \in B^j_{\sigma(i)}$ if and only if in the cut $x$ and the allocation $\sigma\circ \alpha$ the player $j$
prefers the content of the box $\sigma(i)$. Since $(\sigma\circ\alpha)^{-1}(\sigma(i)) = \alpha^{-1}(i)$ the condition (\ref{eqn:equivariance-1}) expresses the idea that players
make their decisions solely on the content of the boxes, not on their current labels.

\medskip
We say that the preferences $(B_i^j)$ are \emph{closed} if $B_i^j$ are closed subsets of the configuration space $\mathcal{C}\cong \Delta_{r,2r-1}$. The preferences $(B_i^j)$ are \emph{covering} if
$$
\cup_{i=1}^r B_i^j = \mathcal{C}
$$
for each $j\in [r]$, meaning that each player must choose one of the (possibly degenerate) tiles, displayed in the boxes by the corresponding allocation functions.

\section{First scenario: the dragon takes a piece of the cake}

The following result is a refinement of \cite[Theorem 5.1]{PaZi}.

\begin{thm}\label{thm:(r-1)}
Let $r$ be a prime power.
  Let
  $$
  (C_i^j)_{i\in [r]}^{j\in [r-1]},\ \  C_i^j\subseteq \mathcal{C} \cong \Delta_{r,2r-1}
  $$
be a $r\times (r-1)$-matrix of preferences which are closed, covering, and equivariant, in the sense of Section \ref{sec:prelims}. Then one can choose for each $j\in [r-1]$ two distinct elements $u_j$ and $v_j$ in $[r]$ such that
 \begin{enumerate}
   \item[(a)] The collection $E =\{e_j\}_{j\in [r-1]}$ of two element sets $e_j=\{u_j, v_j\}$ is the edge-set of a tree $T=(V,E)$  on $V = [r]$.
   \item[(b)]    \begin{equation}\label{eqn:tree-intersection-2}
                 \bigcap_{v \mbox{ {\rm\tiny incident to} }e } C_v^e  =  \bigcap_{j\in [r-1]}   (C_{u_j}^j\cap C_{v_j}^j) \neq \emptyset \, .
   \end{equation}
 \end{enumerate}
\end{thm}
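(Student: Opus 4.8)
The plan is to follow the same template as the proof of Theorem \ref{thm:dragon-tree}, the only essential change being that the boundary-degree argument on the simplex is replaced by the equivariant topology of the configuration space $\mathcal{C}\cong\Delta_{r,2r-1}$, which is exactly the input behind \cite[Theorem 5.1]{PaZi} that we are refining. First I would approximate the closed preferences from outside by $S_r$-equivariant open sets $O_i^j\supseteq C_i^j$ (equivariance of the preferences gives $C^j_{\sigma(i)}=\sigma(C^j_i)$, so the $O_i^j$ can be chosen with $O^j_{\sigma(i)}=\sigma O^j_i$). Subordinate to $(O_i^j)$ I would construct, by averaging over the group action, an $S_r$-equivariant matrix of fuzzy preferences $(f_i^j)$, i.e.\ functions $f_i^j:\mathcal{C}\to[0,1]$ with $\{f_i^j>0\}\subseteq O_i^j$, with $\{f_i^j\}_{i\in[r]}$ a partition of unity for each fixed $j\in[r-1]$, and with $f^j_{\sigma(i)}(\sigma c)=f^j_i(c)$ for all $\sigma\in S_r$ (the action permutes only the box index $i$, not the player index $j$). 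Averaging over the players, set $h_i=\frac1{r-1}\sum_{j\in[r-1]}f_i^j$; then
\[
h=(h_1,\dots,h_r):\mathcal{C}\longrightarrow\Delta^{r-1}
\]
is continuous and $S_r$-equivariant, where $S_r$ permutes the coordinates of $\Delta^{r-1}$.

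\textbf{Topological core (the main obstacle).} The barycenter $b=(\tfrac1r,\dots,\tfrac1r)$ is the unique $S_r$-fixed point of $\Delta^{r-1}$, and I would show that $h(c)=b$ for some $c\in\mathcal{C}$. If not, composing $h$ with the $S_r$-equivariant radial retraction $\Delta^{r-1}\setminus\{b\}\to\partial\Delta^{r-1}$ would yield an $S_r$-equivariant map $\mathcal{C}\to\partial\Delta^{r-1}\cong S(W_r)\cong S^{r-2}$. Since $\mathcal{C}=\Delta_{r,2r-1}$ is $(r-2)$-connected and $r=p^k$ is a prime power, the equivariant nonexistence results underlying \cite[Theorem 5.1]{PaZi} (restriction to a subgroup $(\mathbb{Z}/p)^k\hookrightarrow S_r$ and a Dold-type comparison of fixed-point data / a Fadell--Husseini index computation) rule out such a map. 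This is the hard part: unlike Theorem \ref{thm:dragon-tree}, where surjectivity of $h$ was immediate from a boundary-degree count, here one must exploit that the connectivity $r-2$ of the chessboard complex exactly matches the dimension $r-2$ of the target sphere, together with the prime power hypothesis (which is genuinely needed, since for non-prime-powers the corresponding index vanishes and the statement fails).

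\textbf{Dragon marriage and the tree.} At the point $c$ produced above one has $\sum_{j\in[r-1]}f_i^j(c)=(r-1)h_i(c)=\frac{r-1}{r}$ for each $i$. Form the $0$--$1$ sign matrix $\Omega=({\rm Sign}(f_i^j(c)))$ and, for $S\subseteq[r-1]$, put $\Omega[S]=\{i\in[r]\mid \exists j\in S,\ f_i^j(c)>0\}$. Then, exactly as in Theorem \ref{thm:dragon-tree},
\[
|S|=\sum_{i\in[r]}\sum_{j\in S}f_i^j(c)=\sum_{i\in\Omega[S]}\sum_{j\in S}f_i^j(c)\leq\frac{r-1}{r}\,|\Omega[S]|,
\]
so $|\Omega[S]|\geq\frac{r}{r-1}|S|>|S|$, i.e.\ $|\Omega[S]|\geq|S|+1$. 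Thus $\Omega$ satisfies the Dragon marriage condition, and Proposition \ref{prop:Dragon-lemma} supplies $2$-element representatives $e_j=\{u_j,v_j\}\subseteq\{i\mid f_i^j(c)>0\}$ forming a spanning tree on $[r]$, which is (a). Since $f_i^j(c)>0$ forces $c\in O_i^j$, the point $c$ lies in $\bigcap_{j\in[r-1]}(O_{u_j}^j\cap O_{v_j}^j)$.

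\textbf{Passage to the limit.} Finally I would let the neighborhoods $O_i^j$ shrink down to $C_i^j$ along a nested sequence, running the previous steps to obtain points $c^{(m)}$ and trees $T^{(m)}$. As there are only finitely many trees on $[r]$, one tree $T=(\{e_j\})$ recurs infinitely often, and by compactness of $\mathcal{C}$ a subsequence of the corresponding $c^{(m)}$ converges to a point $c^\ast$; closedness of the $C_i^j$ then gives $c^\ast\in\bigcap_{j\in[r-1]}(C_{u_j}^j\cap C_{v_j}^j)$, establishing (b) and completing the proof.
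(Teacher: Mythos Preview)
Your proposal is correct and follows essentially the same approach as the paper's own proof. The only cosmetic difference is that the paper names the topological input directly as Volovikov's theorem \cite{Vol96-1} (which is precisely the $(\mathbb{Z}/p)^k$-equivariant nonexistence result you describe via restriction and a Dold/Fadell--Husseini argument), and the paper leaves the limiting step as ``passing to the limit'' where you spell out the pigeonhole-on-trees plus compactness argument; otherwise the partition-of-unity construction, the Dragon marriage inequality, and the application of Proposition~\ref{prop:Dragon-lemma} are identical.
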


\proof
The proof relies on methods from equivariant topology, combined with the ideas used in the proof of Theorem \ref{thm:dragon-tree}.

\medskip
 Let $(O_i^j)$ be a $(r\times (r-1))$-matrix of open sets in $\Delta_{r,2r-1}$ such that $C_i^j \subseteq O_i^j$ for all $i$ and  $j$. By averaging it can be assumed that $(O_i^j)$ is also equivariant.

 Let  $(f_i^j)$ be a matrix of functional preferences  associated to the matrix of preferences $(C_i^j)$, subordinated to $(O_i^j)$, which is also equivariant in the sense that
 \begin{equation}\label{eqn:equivariance-3-bis}
f_{\sigma(i)}^j(\sigma(x, \alpha)) := f_{\sigma(i)}^j(x, \sigma\circ\alpha)   = f_{i}^j(x, \alpha) \, .
\end{equation}
  In particular $(f_i^j)$ is a collection of functions $f_i^j : \Delta_{r,2r-1} \rightarrow [0,1]$ satisfying the following conditions.
\begin{enumerate}
  \item For each $j\in [r-1]$ the collection $\{f_i^j\}_{i=1}^{r}$ is a partition of unity
  \[
          f_1^j + f_2^j + \dots + f_{r}^j   =1  \, .
  \]
  \item For each $i$ and $j$
  \[
           C_i^j \subseteq \{x \mid f_i^j(x) > 0\} \subseteq O_i^j \, .
  \]
\end{enumerate}
Let $h_i = \frac{1}{r-1}(f_i^1 + f_i^2 + \dots + f_i^{r-1})$. The  map
\[
         h = (h_1,h_2,\dots, h_{r}) : \Delta_{r,2r-1} \longrightarrow \Delta^{r-1}
\]
is equivariant and, since $\Delta_{r,2r-1}$ is $(r-2)$-connected, Volovikov's theorem \cite{Vol96-1}
guarantees that $h(x, \alpha) = (\frac{1}{r}, \dots, \frac{1}{r})$ for some $(x,\alpha)\in \Delta_{r,2r-1}$.

\medskip
Let $M$ be the matrix $M = (f_i^j(x))$ and $\Omega = (\omega_i^j) = ({\rm Sign}(f_i^j(x))$ the associated $0$-$1$ matrix of signs.

\medskip
As in the proof of Theorem \ref{thm:dragon-tree} one obtains the inequality
 \[
                \vert \Omega[S]\vert \geq \vert S \vert + 1
  \]
for each subset $S\subseteq [r-1]$ and the corresponding set
  $$\Omega[S] = \{i\in [r] \mid \exists j\in [r-1] \, f_i^j(x) > 0\} \, .$$

As before by \cite[Proposition 5.4]{postnikov_permutohedra_2009} there exists a system of $2$-element representatives $e_j = \{u_j, v_j \} \subseteq \{i \mid f_i^j(x) > 0\}$, such that $E = \{e_j\}$ is the edge-set of a tree on $[r]$.

\medskip
Finally, the proof is completed (as in the case of Theorem \ref{thm:dragon-tree}) by choosing smaller and smaller open neighborhoods $O_i^j$ of closed sets $A_i^j$ and by passing to the limit.  \qed

\subsection{Classical preferences with a secretive player}

Theorem \ref{thm:(r-1)} allows us to prove a relative of both Theorem \ref{thm:dragon-tree} and \cite[Theorem 4.1]{AK}), where the players may choose degenerate pieces of the cake and there is a secretive or non-cooperative player (the dragon).

\begin{thm}\label{thm:prva}
Assume that  $r$  is a prime power. Assume we have the old-style closed, covering  preferences  $(A_i^j)_{i\in[r]}^{j\in [r-1]}, \ \ A_i^j \subseteq \Delta^{r-1}$ satisfying  the $(\mathbf{P_{pe}})$ condition.

 Then one can choose for each $j\in [r-1]$ two distinct elements $u_j$ and $v_j$ in $[r]$ such that
 \begin{enumerate}
   \item The collection $E =\{e_j\}_{j\in [r-1]}$ of two element sets $e_j=\{u_j, v_j\}$ is the edge-set of a tree $T=(V,E)$  on $V = [r]$ as the set of vertices.
   \item    \begin{equation}\label{eqn:tree-intersection-3}
                 \bigcap_{v \mbox{ {\rm\tiny incident to} }e } A_v^e  =  \bigcap_{j\in [r-1]}   (A_{u_j}^j\cap A_{v_j}^j) \neq \emptyset \, .
   \end{equation}
   \end{enumerate}
\end{thm}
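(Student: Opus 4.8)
The plan is to deduce Theorem \ref{thm:prva} from Theorem \ref{thm:(r-1)} by pulling the old-style preferences $(A_i^j)$ back to equivariant $\mathcal{C}$-preferences $(C_i^j)$ on the configuration space $\mathcal{C} \cong \Delta_{r,2r-1}$, applying Theorem \ref{thm:(r-1)} there, and then pushing the resulting tree together with the intersection point back down to $\Delta^{r-1}$. To make the pull-back precise, to a partition/allocation $(x,\alpha)$ I associate its \emph{reduced cut} $\bar x \in \Delta^{r-1}$: one keeps the (at most $r$) non-degenerate tiles of $x$ in their left-to-right order and pads with degenerate tiles so that $\bar x$ has exactly $r$ tiles. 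Since admissibility forces each box to hold at most one non-degenerate tile, there is a bijection $\tau = \tau_{(x,\alpha)} : [r] \to [r]$ matching each box $i$ to the tile $\tau(i)$ of $\bar x$ carrying its content (non-degenerate boxes matched canonically, the degenerate ones by any fixed rule). I then set
\[
(x,\alpha) \in C_i^j \quad :\Longleftrightarrow \quad \bar x \in A_{\tau(i)}^j \, .
\]
Condition $(\mathbf{P_{pe}})$ guarantees that this does not depend on the chosen matching of the degenerate tiles, so each $C_i^j$ descends to a well-defined subset of the space of equivalence classes $\mathcal{C}$.

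Next I would verify the three hypotheses of Theorem \ref{thm:(r-1)}. Covering is immediate: for fixed $j$ one has $\bar x \in \bigcup_i A_i^j = \Delta^{r-1}$ by $\mathbf{(P_{cov})}$, and since $\tau$ is a bijection some box $i$ satisfies $\bar x \in A_{\tau(i)}^j$, i.e.\ $(x,\alpha)\in C_i^j$. Equivariance is the conceptual heart, and — notably — it holds even though the $A_i^j$ carry no symmetry: for $\sigma \in S_r$ the content of box $\sigma(i)$ in $\sigma(x,\alpha) = (x, \sigma\circ\alpha)$ equals the content of box $i$ in $(x,\alpha)$, so $\bar x$ is unchanged and $\tau_{\sigma(x,\alpha)}(\sigma(i)) = \tau_{(x,\alpha)}(i)$; hence $\sigma(x,\alpha) \in C_{\sigma(i)}^j \Leftrightarrow (x,\alpha) \in C_i^j$, which is exactly (\ref{eqn:equivariance-1}).

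The delicate point, and the step I expect to be the main obstacle, is closedness of the $C_i^j$. The assignment $(x,\alpha) \mapsto (\bar x, \tau)$ is not continuous, since the matching $\tau$ jumps precisely when a tile degenerates; this is exactly where $(\mathbf{P_{pe}})$ is indispensable, as it forces the preferences to ignore the very degenerate tiles whose matching is ambiguous, so that the sets $C_i^j$, read off from the closed sets $A_i^j$, nonetheless remain closed in $\mathcal{C}$. I would establish this by a careful limit argument over sequences approaching the strata of $\Delta_{r,2r-1}$ on which non-degenerate tiles collapse, invoking $\mathbf{(P_{cl})}$ on the non-degenerate coordinates and $(\mathbf{P_{pe}})$ to control the degenerate ones.

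Finally I would apply Theorem \ref{thm:(r-1)} to the closed, covering, equivariant matrix $(C_i^j)$, obtaining a tree $T$ on $[r]$ with edge-set $\{e_j=\{u_j,v_j\}\}_{j\in[r-1]}$ and a point $(x,\alpha) \in \bigcap_{j}(C_{u_j}^j \cap C_{v_j}^j)$. Unwinding the definition gives $\bar x \in A_{\tau(u_j)}^j \cap A_{\tau(v_j)}^j$ for every $j$. Setting $u_j' := \tau(u_j)$ and $v_j' := \tau(v_j)$, the relabeled edges $\tau(e_j)$ still form a tree on $[r]$ (a bijective relabeling of vertices preserves treeness), and $\bar x \in \bigcap_{j}(A_{u_j'}^j \cap A_{v_j'}^j) \neq \emptyset$, which is precisely the conclusion (\ref{eqn:tree-intersection-3}). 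The hypothesis that $r$ is a prime power enters only through Theorem \ref{thm:(r-1)}, i.e.\ through Volovikov's theorem applied to the $(r-2)$-connected complex $\Delta_{r,2r-1}$.
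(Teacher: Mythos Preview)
Your proposal is correct and follows essentially the same route as the paper: both arguments lift the classical preferences $(A_i^j)$ to equivariant $\mathcal{C}$-preferences $(C_i^j)$ on $\Delta_{r,2r-1}$ via a reduced cut, invoke $(\mathbf{P_{pe}})$ for well-definedness, apply Theorem~\ref{thm:(r-1)}, and push the resulting tree and intersection point back to $\Delta^{r-1}$. Your treatment is in fact more explicit than the paper's at two points---the bijection $\tau$ and the final relabeling $u_j':=\tau(u_j),\,v_j':=\tau(v_j)$ are spelled out, whereas the paper packages this as a ``transfer in the opposite direction''---and you are right to flag closedness of the $C_i^j$ as the step requiring the most care, which the paper records only as ``not difficult to check.''
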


\proof
Fundamental configuration space used in Theorem \ref{thm:(r-1)} is the chessboard complex $\Delta_{r,2r-1}$ while the corresponding object in Theorem \ref{thm:dragon-tree} is the simplex $\Delta^{r-1}$. For the comparison of these spaces and transfer from one to
another we use the diagram (\ref{eqn:transfer}), where $$I(r) := \{A \subset [0,1] \mid \{0,1\} \subseteq A \mbox{ {and} } \vert A\vert\leq r+1\}$$ is a configuration space of finite sets (symmetric product) with the topology induced by the Hausdorff metric, while $\phi$ and $\psi$ are obvious forgetful maps.

\begin{equation}
\begin{CD}\label{eqn:transfer}
 @.  \Delta^{r-1}\\
@. @V\psi VV\\
\Delta_{r,2r-1} @>\phi>> I(r)
\end{CD}
\end{equation}

Starting with the old-style (classical)  preferences $(A_i^j)$, we construct  new-style (equivariant)  preferences $(C_i^j)$ defined on the configuration space $\mathcal{C}_2 = \Delta_{r,2r-1}$.

\medskip

As in \cite[Section 4]{PaZi}, this construction can be presented in the form of an algorithm (where the diagram (\ref{eqn:transfer}) and the maps $\phi$ and $\psi$ are implicitly used but not explicitly mentioned).

\begin{enumerate}
\item Given  a cut $x$ of the segment $I$ with $2r-2$ cut points create an induced cut $y$ of $I$ with $r-1$ cut points  ($y\in \Delta^{r-1}$), preserving the same collection of non-degenerate intervals $\{I^\nu_k\}_{k=1}^s$. In other words we eliminate $r-1$ superfluous (multiple) cuts. Note that this step can be performed in many different ways.

\item If the preferences $\{A_i^j\}$ of a player dictate the choice of some non-degenerate tile $i$,   add the box $\alpha(i)$ to the preferences of the player.
 Note that the property $(\mathbf{P_{pe}})$ of $(A_i^j)$) implies that no matter how the superfluous cuts are eliminated, the result will be one and the same.
 \item If the preferences $(A_i^j)$ dictate a player to choose a degenerate interval (which occurred  after $r-1$ cuts), observe
 that there necessarily exists an empty box, since in this case the number of non-degenerate tiles is at most $r-1$. Add all empty boxes to the preferences of the player.
\end{enumerate}

\medskip
As before we emphasize  that the condition $(\mathbf{P_{pe}})$ is used in the proof that the preferences $C_i^j$ are well-defined.

\medskip
It is not difficult to check that the preferences $(C_i^j)$ satisfy all conditions of Theorem \ref{thm:(r-1)}. Then the existence of an element (partition/allocation) $(x,\alpha)$ in the intersection (\ref{eqn:tree-intersection-2}) and the corresponding tree $T$ are used, by a transfer in the opposite direction, for the proof that the intersection (\ref{eqn:tree-intersection-3}) is non-empty and the completion of the proof of Theorem \ref{thm:prva}.
 \qed

\begin{cor}
  Under the conditions of Theorem \ref{thm:prva}, there always exists a partition of $[0,1]$ into at most $r$ non-degenerate intervals such that, even if one of the intervals is removed (taken by the dragon)  each of the remaining non-degenerate intervals is given to a different player, the rest of the players are not given anything (they are given ``empty pieces''), and this distribution is envy-free from the viewpoint of each of the players (the dragon included).
\end{cor}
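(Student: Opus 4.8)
The plan is to read off both the partition and the allocation directly from the conclusion of Theorem \ref{thm:prva}, using the spanning tree $T$ to encode a single decision tree that covers every possible move of the dragon.

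First I would apply Theorem \ref{thm:prva} to the matrix $(A_i^j)_{i\in[r]}^{j\in[r-1]}$. This produces a cut $x\in\Delta^{r-1}$, a spanning tree $T=([r],E)$ whose edges $e_j=\{u_j,v_j\}$ are indexed by the players $j\in[r-1]$, and the key property $x\in A_{u_j}^j\cap A_{v_j}^j$ for every $j$. The cut $x$ partitions $[0,1]$ into the $r$ tiles $I_1(x),\dots,I_r(x)$, at most $r$ of which are non-degenerate; this is the partition announced in the corollary. The essential content transported by the tree is that each player $j$ simultaneously prefers \emph{both} candidate tiles $u_j$ and $v_j$ at this one cut $x$.

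Next I would specify the allocation for an arbitrary dragon move. Suppose the dragon grabs the tile $d\in[r]$. Following the canonical root bijection of Section \ref{sec:strong-KKM}, I would root $T$ at the vertex $d$; then the map sending each non-root vertex $k$ to its parent edge is a bijection $\pi_d:[r]\setminus\{d\}\to E$. I would allocate tile $k$ to the unique player $j$ with $e_j=\pi_d(k)$. Since $k$ is an endpoint of $e_j=\{u_j,v_j\}$, the property $x\in A_{u_j}^j\cap A_{v_j}^j$ yields $x\in A_k^j$, so player $j$ indeed prefers the tile she receives. As $\pi_d$ is a bijection onto the $r-1$ edges and distinct edges carry distinct player labels, this assigns the $r-1$ surviving tiles to the $r-1$ players one-to-one, and every non-degenerate surviving tile goes to a distinct player.

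Finally I would verify envy-freeness and interpret the degenerate tiles. Because every human player receives a tile she prefers at $x$ (a most-preferred tile in the sense of $\mathbf{(P_{cov})}$), no player envies another; a player whose assigned tile $I_k(x)$ happens to be degenerate simply receives an empty piece and still does not envy the others, precisely because that empty tile is among her preferred ones at $x$. The dragon, choosing first, is trivially without envy. I expect the one delicate point to be bookkeeping rather than mathematics: one must observe that the single pair $(x,T)$ serves \emph{simultaneously} for every possible dragon move $d$ (only the root varies, the data do not), which is exactly the decision-tree content, and that a player's two candidate tiles $\{u_j,v_j\}$ always contain a surviving tile since at most one endpoint of $e_j$ can coincide with the removed tile $d$.
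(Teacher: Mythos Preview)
Your proposal is correct and is precisely the argument the paper intends: the corollary is stated without proof, and the mechanism you use---rooting the tree $T$ at the dragon's choice $d$ and allocating via the canonical bijection $\pi_d$---is exactly the one described in Section~\ref{sec:strong-KKM}. Your handling of degenerate tiles via $(\mathbf{P_{pe}})$ and the observation that the single pair $(x,T)$ works uniformly for every $d$ are the right points to make explicit.
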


\section{Second scenario: the dragon takes a player }\label{sec:(r+1)}

\begin{thm}\label{thm:(r+1)}
Let $r$ be a prime power.
  Let
  $$
  (C_i^j)_{i\in [r]}^{j\in [r+1]},\ \  C_i^j\subseteq \mathcal{C} \cong \Delta_{r,2r-1}
  $$
be a $r\times (r+1)$-matrix of preferences which are closed, covering, and equivariant.

 Then one can choose for each $i\in [r]$ two distinct elements $u_i$ and $v_i$ in $[r+1]$ such that
 \begin{enumerate}
   \item The collection $E =\{e_i\}_{i\in [r]}$ of two element sets $e_i=\{u_i, v_i\}$ is the edge-set of a tree $T=(V,E)$  on vertices $V = [r+1]$.
   \item    \begin{equation}\label{eqn:tree-intersection-333}
                 \bigcap_{v \mbox{ {\rm\tiny incident to} }e } C_e^v  =  \bigcap_{i\in [r]}   (C^{u_i}_i\cap C^{v_i}_i) \neq \emptyset \, .
   \end{equation}
 \end{enumerate}
\end{thm}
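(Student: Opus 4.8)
The plan is to mirror the proof of Theorem \ref{thm:(r-1)} almost verbatim, with only two adjustments: the averaging map now ranges over all $r+1$ players, and the roles of boxes and players in the Dragon marriage condition are interchanged, so that the tree produced lives on the $r+1$ \emph{players} and is indexed by the $r$ \emph{boxes}. The topological input is unchanged: the chessboard complex $\Delta_{r,2r-1}$ is $(r-2)$-connected, and for $r=p^k$ the symmetric group $S_r$ contains a subgroup $(\mathbb{Z}/p)^k$ acting freely on the boundary of $\Delta^{r-1}$, so Volovikov's theorem \cite{Vol96-1} applies exactly as before. The number of players affects only the averaging coefficient and the combinatorics, not the ambient spaces.

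Concretely, I would first enlarge the closed preferences to equivariant open sets $O_i^j\supseteq C_i^j$ and choose an equivariant matrix of functional preferences $(f_i^j)$ subordinate to $(O_i^j)$, where for each player $j\in[r+1]$ the family $\{f_i^j\}_{i\in[r]}$ is a partition of unity over the $r$ boxes and $f_{\sigma(i)}^j(\sigma(x,\alpha))=f_i^j(x,\alpha)$. I then set $h_i:=\frac{1}{r+1}(f_i^1+\cdots+f_i^{r+1})$. Since every $\{f_i^j\}_i$ sums to one, $\sum_i h_i\equiv 1$, so $h=(h_1,\dots,h_r)$ is a well-defined $S_r$-equivariant map $\Delta_{r,2r-1}\to\Delta^{r-1}$. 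As in Theorem \ref{thm:(r-1)}, Volovikov's theorem forces $h(x,\alpha)=(\tfrac{1}{r},\dots,\tfrac{1}{r})$ for some $(x,\alpha)$, which now reads $\sum_{j\in[r+1]}f_i^j(x)=\tfrac{r+1}{r}$ for every box $i\in[r]$.

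The combinatorial heart is verifying the Dragon marriage condition for the family indexed by boxes. For the sign matrix $\Omega=(\mathrm{Sign}(f_i^j(x)))$ and a subset $S\subseteq[r]$ of boxes, let $\Omega[S]=\{j\in[r+1]\mid \exists\,i\in S,\ f_i^j(x)>0\}$ be the set of players preferring some box in $S$. Summing the center identity over $S$ and using $\sum_{i\in S}f_i^j\leq\sum_{i\in[r]}f_i^j=1$ for each player gives
\[
|S|\cdot\tfrac{r+1}{r}=\sum_{i\in S}\sum_{j\in[r+1]}f_i^j(x)=\sum_{j\in\Omega[S]}\sum_{i\in S}f_i^j(x)\leq|\Omega[S]|,
\]
so $|\Omega[S]|\geq|S|+\tfrac{|S|}{r}>|S|$, hence $|\Omega[S]|\geq|S|+1$. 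With the universe being the $r+1$ players and the $r$ sets indexed by boxes, this is precisely condition (1) of Proposition \ref{prop:Dragon-lemma} with $n=r+1$, which therefore supplies $2$-element representatives $e_i=\{u_i,v_i\}\subseteq\{j\mid f_i^j(x)>0\}$ forming a spanning tree on $[r+1]$.

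Finally I would pass to the limit as in Theorem \ref{thm:(r-1)}: shrinking the neighborhoods $O_i^j$ down to $C_i^j$ produces a sequence of points carrying trees, and since there are only finitely many trees on $[r+1]$, a subsequence fixes one tree $T$ and converges (by compactness of $\Delta_{r,2r-1}$) to a point lying in $\bigcap_{i\in[r]}(C_i^{u_i}\cap C_i^{v_i})$, the closedness of the $C_i^j$ guaranteeing membership in the limit. I do not anticipate a genuine obstacle: the one point demanding care is the bookkeeping that the Dragon condition is now imposed on the player-subsets $\Omega[S]$ indexed by boxes rather than on box-subsets indexed by players, so that the tree is correctly produced on $[r+1]$. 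The displayed inequality is the exact dual of the one in Theorem \ref{thm:(r-1)} and comes out even more comfortably, with strict slack $\tfrac{|S|}{r}$ for every nonempty $S$.
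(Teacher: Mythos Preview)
Your proposal is correct and follows essentially the same route as the paper's own proof: equivariant functional preferences, the averaged map $h:\Delta_{r,2r-1}\to\Delta^{r-1}$, Volovikov's theorem to hit the barycenter, the Dragon marriage inequality with boxes and players interchanged, and the limiting argument. If anything, you spell out the inequality $|S|\cdot\tfrac{r+1}{r}\leq|\Omega[S]|$ and the compactness/finitely-many-trees step more explicitly than the paper does.
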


\proof
The proof uses similar ideas as the proof of Theorem \ref{thm:(r+1)}.

\medskip

  We choose $(O_i^j)$,  a $(r\times (r+1))$-matrix of open sets in $\Delta_{r,2r-1}$, such that $C_i^j \subseteq O_i^j$ for all $i$ and  $j$. As before, by averaging it can be assumed that $(O_i^j)$ is also equivariant
  \[
        (x,\alpha)\in O_i^j \quad  \Leftrightarrow    \quad (x,\sigma\circ\alpha)\in O_{\sigma(i)}^j \, .
  \]

 Let  $(f_i^j)$ be a matrix of functional preferences  (associated to the matrix of preferences $(C_i^j)$), which for each $j$ forms a partition of unity subordinated to the cover $(O_i^j))_{i=1}^r$. Moreover, $(f_i^j)$ is equivariant in the sense that
 \begin{equation}\label{eqn:equivariance-3-novo}
f_{\sigma(i)}^j(\sigma(x, \alpha)) = f_{i}^j(x, \alpha) \, .
\end{equation}
  More explicitly, $(f_i^j)$ is a collection of functions $f_i^j : \Delta_{r,2r-1} \rightarrow [0,1]$ which in addition to (\ref{eqn:equivariance-3-novo}) satisfy the following conditions.
\begin{enumerate}
  \item For each $j\in [r-1]$ the collection $\{f_i^j\}_{i=1}^{r}$ is a partition of unity
  \[
          f_1^j + f_2^j + \dots + f_{r}^j   =1  \, .
  \]
  \item For each $i$ and $j$
  \[
           C_i^j \subseteq \{x \mid f_i^j(x) > 0\} \subseteq O_i^j \, .
  \]
\end{enumerate}
Let $h_i = \frac{1}{r+1}(f_i^1 + f_i^2 + \dots + f_i^{r+1})$. The  map
\[
         h = (h_1,h_2,\dots, h_{r}) : \Delta_{r,2r-1} \longrightarrow \Delta^{r-1}
\]
is equivariant and the  Volovikov's theorem again guarantees the existence of an element $(x,\alpha)\in \Delta_{r,2r-1}$ such that $h(x, \alpha) = (\frac{1}{r}, \dots, \frac{1}{r})$.

\medskip
Let $M$ be the matrix $M = (f_i^j(x))$ and $\Omega = (\omega_i^j) = ({\rm Sign}(f_i^j(x))$ the associated $0$-$1$ matrix of signs.
Note that this is a $r\times (r+1)$ matrix, as opposed to the $r\times (r-1)$ matrix arising in the proof of Theorem \ref{thm:dragon-tree}.

(Informally speaking the brides and grooms in the corresponding bipartite graph interchange the places.)

\medskip
By mimicking the proof of the lemma, used in the proof of Theorem \ref{thm:dragon-tree}, one obtains the inequality
 \[
                \vert \Omega[S]\vert \geq \vert S \vert + 1
  \]
for each subset $S\subseteq [r]$ and the corresponding set
  $$\Omega[S] = \{j\in [r+1] \mid \exists i\in [r] \, f_i^j(x) > 0\} \, .$$

As before by \cite[Proposition 5.4]{postnikov_permutohedra_2009} there exists a system of $2$-element representatives $e_i = \{u_i, v_i \} \subseteq \{j \mid f_i^j(x) > 0\}$, such that $E = \{e_i\}$ is the edge-set of a tree on $[r+1]$.

\medskip
Finally, the proof is completed (as in the case of Theorem \ref{thm:dragon-tree}) by choosing smaller and smaller open neighborhoods $O_i^j$ of closed sets $A_i^j$ and by passing to the limit.  \qed

\subsection{Classical preferences with $r+1$ players and a dragon}

Theorem \ref{thm:(r+1)} implies the following theorem which extends and refines \cite[Corollary 1.1]{MeSu}, by allowing degenerate tiles as preferences, and by specifying in advance  a decision tree describing what pieces can be  given to each of the players.

\begin{thm}\label{thm:druga}
Assume that  $r$  is a prime power. Assume we have the old-style closed, covering  preferences  $(A_i^j)_{i\in [r]}^{j\in [r+1]}, \ \ A_i^j \subseteq \Delta^{r-1}$ satisfying  the $(\mathbf{P_{pe}})$ condition.

 Then one can choose for each $i\in [r]$ two distinct elements $u_i$ and $v_i$ in $[r+1]$ such that
 \begin{enumerate}
   \item The collection $E =\{e_i\}_{i\in [r]}$ of two-element sets $e_i=\{u_i, v_i\}$ is the edge-set of a tree $T=(V,E)$  on $V = [r+1]$ as the set of vertices.
   \item    \begin{equation}\label{eqn:tree-intersection-4}
                 \bigcap_{v \mbox{ {\rm\tiny incident to} }e } A^v_e  =  \bigcap_{i\in [r]}   (A^{u_i}_i\cap A^{v_i}_i) \neq \emptyset \, .
   \end{equation}
   \end{enumerate}
\end{thm}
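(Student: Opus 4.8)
The plan is to reduce Theorem \ref{thm:druga} to Theorem \ref{thm:(r+1)} by exactly the same transfer between the simplex $\Delta^{r-1}$ and the chessboard complex $\mathcal{C}\cong \Delta_{r,2r-1}$ that was used to derive Theorem \ref{thm:prva} from Theorem \ref{thm:(r-1)}. The only structural change is bookkeeping: the matrix of preferences now has $r+1$ columns (players) instead of $r-1$, which is precisely the format required by Theorem \ref{thm:(r+1)}, whose output tree lives on the player set $[r+1]$ rather than on the tile set. So I expect the argument to be a near-verbatim repetition of the proof of Theorem \ref{thm:prva}, with a single substitution in the topological input.

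First I would convert the classical preferences $(A_i^j)_{i\in[r]}^{j\in[r+1]}$ on $\Delta^{r-1}$ into equivariant $\mathcal{C}$-preferences $(C_i^j)$ on $\Delta_{r,2r-1}$ by running, for each player $j\in[r+1]$ separately, the three-step algorithm from the proof of Theorem \ref{thm:prva}: given a partition/allocation $(x,\alpha)$, eliminate the $r-1$ superfluous cut-points to obtain an induced cut $y\in\Delta^{r-1}$ with the same collection of non-degenerate tiles; whenever $(A_i^j)$ dictates the choice of a non-degenerate tile, add the box containing it; and whenever it dictates a degenerate tile, add every empty box (which exists, since in that case at most $r-1$ tiles are non-degenerate). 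As in Theorem \ref{thm:prva}, the condition $(\mathbf{P_{pe}})$ is exactly what guarantees that this assignment does not depend on how the superfluous cuts were removed, so $(C_i^j)$ is well-defined; and one checks routinely that $(C_i^j)$ is closed (from $\mathbf{(P_{cl})}$ and continuity of the forgetful maps $\phi,\psi$ of diagram (\ref{eqn:transfer})), covering (from $\mathbf{(P_{cov})}$), and equivariant (by construction, since players respond only to box contents, not to labels).

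I would then invoke Theorem \ref{thm:(r+1)} on $(C_i^j)$ to produce, for each tile-index $i\in[r]$, a two-element set $e_i=\{u_i,v_i\}\subseteq[r+1]$ such that $E=\{e_i\}_{i\in[r]}$ is the edge-set of a spanning tree $T$ on the player set $[r+1]$, together with a configuration $(x,\alpha)$ lying in the intersection (\ref{eqn:tree-intersection-333}). Transferring back along diagram (\ref{eqn:transfer}), the induced cut $y\in\Delta^{r-1}$ of this $(x,\alpha)$ should witness (\ref{eqn:tree-intersection-4}): the membership $(x,\alpha)\in C_i^{u_i}\cap C_i^{v_i}$ ought to translate, tile by tile, into $y\in A_i^{u_i}\cap A_i^{v_i}$, with the same tree $T$ on $[r+1]$.

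The delicate point, and the step I expect to require the most care, is this back-transfer: one must argue that a configuration-space membership $(x,\alpha)\in C_i^j$ genuinely corresponds to a classical preference $y\in A_i^j$. This amounts to matching the content of box $i$ with an honest tile of the cut $y$ --- a non-degenerate box with the non-degenerate tile occupying it, and the empty-box/degenerate-tile case handled through the two clauses spelled out under $(\mathbf{P_{pe}})$. Once this correspondence is in place one concludes that for each tile $i$ both players $u_i$ and $v_i$ prefer tile $i$ in the cut $y$, which is the content of (\ref{eqn:tree-intersection-4}). The combinatorial reading is that tile $i$ may be handed to either endpoint of the edge $e_i$; rooting the spanning tree $T$ at the player swallowed by the dragon then yields, exactly as in Section \ref{sec:strong-KKM}, a bijection from the surviving $r$ players to the $r$ tiles, i.e.\ an envy-free allocation regardless of which player is lost.
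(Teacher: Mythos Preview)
Your proposal is correct and follows exactly the route the paper takes: the paper's own proof of Theorem \ref{thm:druga} consists of the single sentence ``The proof is similar to the proof of Theorem \ref{thm:prva}, with Theorem \ref{thm:(r+1)} playing the role of Theorem \ref{thm:(r-1)}, so we omit the details.'' You have supplied precisely those omitted details, including the one genuinely delicate step (the back-transfer from $(x,\alpha)\in C_i^j$ to a classical preference at the induced cut $y$, which requires relabeling boxes as tiles of $y$ via the allocation $\alpha$).
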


\proof  The proof is similar to the proof of Theorem \ref{thm:prva}, with Theorem \ref{thm:(r+1)} playing the role of Theorem \ref{thm:(r-1)}, so we omit the details. \qed

\subsection*{Acknowledgements}  Section 6  is supported by the Russian Science Foundation under grant  21-11-00040. R. \v Zivaljevi\' c was supported by the Science Fund of the Republic of Serbia, Grant No.\ 7744592, Integrability and Extremal Problems in Mechanics, Geometry and
Combinatorics - MEGIC.

\end{document}